\newtheorem{theorem}{Theorem}[section]
\newtheorem{proposition}[theorem]{Proposition}
\newtheorem{example}{Example}
\newtheorem{question}{Question}
\theoremstyle{remark}
\newtheorem*{remark}{Remark}
\theoremstyle{definition}
\newtheorem{definition}[theorem]{Definition}
\newcommand{\tef}{transcendental entire function}
\newcommand\qfor{\quad\text{for }}
\newcommand \C{\mathbb{C}}
\newcommand \N{\mathbb{N}}
\newcommand \R{\mathbb{R}}
\def\seq{(z_n)_{n\geq 0}}
\newcommand*{\defeq}{\mathrel{\vcenter{\baselineskip0.5ex \lineskiplimit0pt
                     \hbox{\scriptsize.}\hbox{\scriptsize.}}}%
                     =}
\begin{document}
%
%
%
%
\title[Which sequences are orbits?]{Which sequences are orbits?}
\author[{D. A. Nicks \and D. J. Sixsmith}]{Daniel A. Nicks \and David J. Sixsmith}
\address{School of Mathematical Sciences \\ University of Nottingham \\ Nottingham
NG7 2RD \\ UK \newline \indent \href{https://orcid.org/0000-0002-9493-2970}{\includegraphics[width=1em,height=1em]{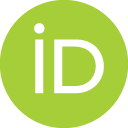} {\normalfont https://orcid.org/0000-0002-9493-2970}}}
\email{Dan.Nicks@nottingham.ac.uk}
\address{Department of Mathematical Sciences \\
	 University of Liverpool \\
   Liverpool L69 7ZL\\
   UK \newline \indent \href{https://orcid.org/0000-0002-3543-6969}{\includegraphics[width=1em,height=1em]{orcid2.png} {\normalfont https://orcid.org/0000-0002-3543-6969}}}
\email{djs@liverpool.ac.uk}
\thanks{2010 Mathematics Subject Classification. Primary 37F10; Secondary 30D05, 30C62.\vspace{3pt}\\ Key words: iteration, orbits, holomorphic dynamics, quasiregular dynamics.\vspace{3pt}\\ }
%
%
%
%
\begin{abstract}
In the study of discrete dynamical systems, we typically start with a function from a space into itself, and ask questions about the properties of sequences of iterates of the function. In this paper we reverse the direction of this study. In particular, restricting to the complex plane, we start with a sequence of complex numbers and study the functions (if any) for which this sequence is an orbit under iteration. This gives rise to questions of existence and of uniqueness. We resolve some questions, and show that these issues can be quite delicate.
\end{abstract}
\maketitle
%
%
%
\section{Introduction}
Complex dynamics usually begins with a function $f : X \to X$, for some Riemann surface $X$. For a point $z \in X$, we then consider the properties of the \emph{orbit} of $z$; in other words, the sequence of images under iteration of $f$
 $$z, f(z), f^2(z) \defeq f(f(z)), \ldots.$$

Our goal in this paper is to reverse these considerations, at least in the slightly restricted case where $X = \C$. In other words, we begin with a sequence $\seq$ of elements of $\C$, and then ask the following questions.
\begin{enumerate}[(I)]
\item Does there exist a function $f$ that \emph{realises} the sequence? In other words, for each $n \in \N$, we have that $f^n(z_0) = z_n$.\label{existence}
\item If $\seq$ is realised by some function $f$, then is $f$ unique with this property?\label{uniqueness}
\item Do the answers to \eqref{existence} and \eqref{uniqueness} change if we restrict to different classes of functions? For example, we might restrict to polynomials, entire functions, meromorphic functions or quasiregular functions.
\end{enumerate}
It is immediately clear that not every sequence can be an orbit; it is easy to see, for example, that the sequence $1, 2, 1, 3, \ldots$ cannot be realised by any function since the point $1$ cannot map both to $2$ and to $3$. We will show that the following definition is natural.
\begin{definition}
\label{def:candidateorbit}
Suppose that $\seq$, is a sequence of complex numbers. We say that $\seq$ is a \emph{candidate orbit} if the following condition holds. Suppose that $z \in \C$ and that $(n_j)_{j\in\N}$ is a sequence of non-negative integers such that $z_{n_j} \rightarrow z$ as $j\rightarrow\infty$. Then there is a point $z' \in \C$, \emph{which depends only on $z$}, such that $z_{n_j+1} \rightarrow z'$ as $j \rightarrow\infty$.
\end{definition}
\begin{remark}\normalfont
Note that in this definition we allow the sequence $(n_j)_{j\in\N}$ to be constant.
In particular, if $\seq$ is a candidate orbit, then $z_p = z_q$ implies that $z_{p+k} = z_{q+k}$ for $k \in \N$.
\end{remark}
Our first result justifies the definition of a candidate orbit, and also gives a complete answer to \eqref{existence} and \eqref{uniqueness} in the case of \emph{continuous} functions.
\begin{theorem}
\label{theo:cts}
Suppose that $\seq$ is a sequence of complex numbers. Then $\seq$ is a candidate orbit if and only if there is a continuous map $f : \C \to \C$ that realises $\seq$. Moreover, $f$ is unique with this property if and only if the set $\{ z_n : n \geq 0 \}$ is dense in $\C$.
\end{theorem}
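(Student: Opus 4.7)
The plan is to handle existence first and then uniqueness. For the ``only if'' direction of the existence equivalence, I would simply observe that if a continuous $f \colon \C \to \C$ realises $\seq$ and $z_{n_j} \to z$, then $z_{n_j+1} = f(z_{n_j}) \to f(z)$ by continuity, so setting $z' \defeq f(z)$ (which depends only on $z$) verifies the candidate orbit property.

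For the ``if'' direction, I would construct $f$ in two stages. Let $K \defeq \overline{\{z_n : n \geq 0\}}$. On $K$, define $\tilde g \colon K \to \C$ by choosing, for any $w \in K$, a subsequence $(z_{n_j})$ with $z_{n_j} \to w$ and setting $\tilde g(w) \defeq \lim_j z_{n_j+1}$. The candidate orbit hypothesis guarantees that this limit exists and is independent of the chosen subsequence, so $\tilde g$ is well defined and satisfies $\tilde g(z_n) = z_{n+1}$ for every $n$. The main technical step, and the principal obstacle, is to verify that $\tilde g$ is continuous on $K$. I would do this by a diagonal argument: given $w_k \to w$ in $K$, for each $k$ select an index $n_k$ with $|z_{n_k} - w_k| < 1/k$ and $|z_{n_k+1} - \tilde g(w_k)| < 1/k$, which is possible because $\tilde g(w_k)$ is itself defined as a limit of terms of this form. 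Then $z_{n_k} \to w$, so the candidate orbit property gives $z_{n_k+1} \to \tilde g(w)$, whence $\tilde g(w_k) \to \tilde g(w)$. With $\tilde g$ continuous on the closed set $K \subseteq \C$, I would then apply the Tietze extension theorem componentwise to its real and imaginary parts to obtain a continuous extension $f \colon \C \to \C$, which realises $\seq$ by construction.

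For the ``moreover'' clause, one direction is immediate: any two continuous realisations of $\seq$ must agree on $\{z_n : n \geq 0\}$, hence everywhere if this set is dense in $\C$. For the converse, if $\{z_n : n \geq 0\}$ is not dense then its closure $K$ is disjoint from some open disc $D \subseteq \C$; picking any nontrivial continuous $\phi \colon \C \to \C$ supported in $D$, the map $f + \phi$ is a distinct continuous realisation, so uniqueness fails. The only non-routine part of the whole argument is the continuity of $\tilde g$ on $K$, which is precisely where the candidate orbit hypothesis enters and which the diagonal selection above is designed to handle.
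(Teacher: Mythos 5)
Your proposal is correct and takes essentially the same approach as the paper: verify the candidate orbit condition via continuity, define the map on the closure of the orbit, prove continuity there by a diagonal/three-epsilon argument, extend by Tietze componentwise, and handle uniqueness via density. The only (cosmetic) divergence is in the non-dense case, where you add a bump function supported off $\overline{S}$ while the paper instead prescribes a new value at a point $w\notin\overline{S}$ and re-applies Tietze to $\overline{S}\cup\{w\}$; both give the same conclusion.
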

Next we consider the case of entire functions. While Theorem~\ref{theo:cts} gives a complete answer for continuous functions, it is clear that
the condition of being a candidate orbit is far from sufficient for it to be realised by an entire function.
In this case we are able to give a complete answer to the uniqueness question, but we do not fully resolve the question of existence.

We require some additional definitions. First, we say that a candidate orbit $\seq$ is \emph{periodic} if there exist $n \ne n' \in \N$ such that $z_n = z_{n'}$; note that this definition includes sequences that are often called ``preperiodic''. Second, we say that $\seq$ is \emph{escaping} if $z_n \rightarrow\infty$ as $n\rightarrow\infty$. Finally, $\seq$ is \emph{bounded} if there exists $L>0$ such that $|z_n| \leq L$, for all $n \geq 0$, and it is \emph{bungee} if it is not bounded and not escaping. 

We then have the following, where we distinguish carefully between polynomials and transcendental entire functions.
\begin{theorem}
\label{theo:uniq}
Suppose that $\seq$ is a candidate orbit. Then exactly one of the following holds.
\begin{enumerate}[(a)]
\item $\seq$ is periodic, and is realised by infinitely many {\tef}s and infinitely many polynomials.\label{cond:per}
\item $\seq$ is escaping, and is realised by infinitely many {\tef}s and at most one polynomial. \label{cond:esc}
\item $\seq$ is bungee, and is realised by at most one entire function and no polynomials.\label{cond:unotesc}
\item $\seq$ is bounded and not periodic, and is realised by at most one entire function.\label{cond:bnotper}
\end{enumerate}
\end{theorem}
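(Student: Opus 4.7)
The plan is first to verify that the four cases are mutually exclusive and exhaustive, then to establish the realisation bounds in each. Every periodic candidate orbit visits only finitely many values---by the remark following Definition~\ref{def:candidateorbit}, once it lands on a repeated value it cycles through a finite set---so in particular is bounded, and hence (a) is disjoint from (b)--(d). Cases (b), (c), (d) partition the non-periodic candidate orbits by definition, so it suffices to prove the stated bounds in each case.

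For (a), the orbit visits a finite set of distinct values $w_1,\dots,w_k$, and realising it reduces to the finitely many interpolation conditions $f(w_i)=w_i'$ for certain prescribed targets $w_i' \in \{w_1,\dots,w_k\}$. Lagrange interpolation produces polynomials of every sufficiently high degree satisfying these conditions, giving infinitely many polynomials; and adding $Q\cdot g$ to any such polynomial, where $Q$ is a fixed polynomial vanishing at each $w_i$ and $g$ ranges over {\tef}s, yields infinitely many {\tef}s realising the orbit.

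For (b), the candidate orbit property forces the $z_n$ to be pairwise distinct, since any repetition would make the sequence eventually periodic (by the remark) and hence bounded, contradicting escape. Thus $\{z_n : n\geq 0\}$ is an infinite discrete subset of $\C$. If $p,q$ are polynomials both realising the orbit, then $p-q$ vanishes on this infinite set, so $p=q$. For infinitely many {\tef}s I invoke the classical Weierstrass-type interpolation theorem: given a discrete sequence in $\C$ tending to infinity and prescribed target values, there exists an entire $f$ with $f(z_n)=z_{n+1}$ for every $n$. Letting $W$ be a Weierstrass product with simple zeros exactly on $\{z_n\}$, each $f + cW$ with $c\in\C$ realises the orbit, and for $c\neq 0$ each such function is transcendental because $W$ is.

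For (c) and (d), non-periodicity again forces the $z_n$ to be pairwise distinct, and the failure of escape produces a bounded subsequence of $(z_n)$ converging to some $z^* \in \C$, which is thus an accumulation point of $\{z_n\}$. If $f,g$ are entire functions both realising the orbit, then $f-g$ vanishes on $\{z_n\}$, so the identity theorem yields $f\equiv g$. The remaining claim in (c), that no polynomial realises a bungee orbit, follows from the standard dichotomy for polynomial dynamics: constant polynomials give eventually periodic orbits; degree-one affine maps have orbits that are fixed, bounded, or escape to infinity by direct calculation; and polynomials of degree at least two partition $\C$ into the bounded filled Julia set and the basin of infinity, so every orbit is either bounded or escaping. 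The main technical hurdle is the construction in (b) of infinitely many {\tef}s realising an escaping orbit, which relies on a Weierstrass-type interpolation theorem together with a routine check that perturbations by a Weierstrass product yield pairwise distinct transcendental functions.
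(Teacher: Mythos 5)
Your proof takes essentially the same route as the paper's: interpolation (Lagrange-type for (a), Weierstrass/Mittag--Leffler-type for (b)) plus perturbation by a function vanishing on the orbit to obtain infinitely many realisations, and the identity theorem for uniqueness in (c) and (d), with the mutual-exclusivity check and the polynomial dichotomy in (c) spelled out where the paper leaves them implicit. One small imprecision in (b): $W$ being transcendental does not by itself guarantee that $f + cW$ is transcendental for every $c \neq 0$ (if $f$ happens to be transcendental, one exceptional $c$ could make $f + cW$ a polynomial); the clean way to finish is to note that the functions $f + cW$, $c\in\C$, are pairwise distinct realisations of the orbit and, by your own uniqueness observation, at most one realisation can be a polynomial, so infinitely many are {\tef}s.
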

\begin{remark}\normalfont
An interesting implication of this theorem is the following, which also follows from the Identity theorem. It is now common in complex dynamics, see, for example, \cite{OsSix}, to classify orbits into three type; \emph{escaping}, \emph{bounded}, and \emph{bungee}. Suppose we are given, as data, a bungee orbit of a {\tef}, but are not given the function; note that a polynomial cannot have a bungee orbit, whereas these are always present for {\tef}s, see \cite{OsSix}. Then Theorem~\ref{theo:uniq} implies that this orbit completely determines the {\tef}, and so from this orbit alone we can, in principle, deduce all the other dynamics of the function. This is never true of an escaping orbit, and is true for a bounded orbit if and only if it is not periodic.
\end{remark}

Theorem~\ref{theo:uniq} gives a complete result for {\tef}s apart from the following question.
\begin{question}
\label{question:1}
If $\seq$ is a candidate orbit that is either bungee, or bounded but not periodic, does there exist a {\tef} that realises $\seq$?
\end{question}

In general, this question seems difficult. Note that the sequences considered in Question~\ref{question:1} are exactly those that have a finite accumulation point. In order to provide terminology required for our result concerning candidate orbits that have a finite accumulation point, we first prove the following.

\begin{proposition}
\label{prop:interpol}
Suppose that $(\zeta_n)_{n \geq 0}$ is a sequence of points of $\C$, tending to a point $\zeta$, and with $\zeta_n \ne \zeta$ for all $n$. Suppose also that $(w_n)_{n \geq 0}$ is a sequence of points of $\C$, tending to a point $w$. Suppose finally that $U$ is a disc centred at $\zeta$ containing all the $\zeta_n$ and that there is a function $f$, analytic on $U$, such that $f(\zeta_n) = w_n$, for $n \geq 0$. Then the following limits (defined iteratively) all exist:
\begin{align*}
p &\defeq \lim_{n\to\infty} \frac{\log|w_n - w|}{\log|\zeta_n - \zeta|}; \\
a_p &\defeq \lim_{n\to\infty} \frac{w_n - w}{(\zeta_n-\zeta)^p};         \\
a_{p+k} &\defeq \lim_{n\to\infty} \frac{(w_n - w) - \sum_{j=0}^{k-1} a_{p+j} (\zeta_n-\zeta)^{p+j}}{(\zeta_n-\zeta)^{p+k}}, \text{ for } k \geq 1;
\end{align*}
where $p \in \N$ and $a_p \ne 0$.
Moreover, the formal Taylor series
\begin{equation}
\label{fdef}
f(z) = w + a_p (z-\zeta)^p + a_{p+1} (z-\zeta)^{p+1} + \ldots,
\end{equation}
converges on $U$.
\end{proposition}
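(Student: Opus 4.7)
The plan is to identify the integer $p$ with the order of vanishing of $f - w$ at $\zeta$, and the coefficients $a_{p+k}$ with the Taylor coefficients of $f$ at $\zeta$, so that the whole proposition reduces to a direct application of Taylor's theorem. First, since $f$ is analytic on $U$ with $f(\zeta_n) = w_n \to w$ as $\zeta_n \to \zeta$, continuity forces $f(\zeta) = w$. We may assume that $f \not\equiv w$ on $U$, since otherwise $w_n = w$ for all $n$ and $\log|w_n - w|/\log|\zeta_n-\zeta|$ is not even defined. Hence $f$ admits a Taylor expansion
$$f(z) = w + b_p(z-\zeta)^p + b_{p+1}(z-\zeta)^{p+1} + \cdots,$$
converging on $U$, with $p \in \N$ and $b_p \neq 0$.

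Next, I would substitute $z = \zeta_n$ to obtain
$$w_n - w = (\zeta_n-\zeta)^p \bigl( b_p + b_{p+1}(\zeta_n-\zeta) + b_{p+2}(\zeta_n-\zeta)^2 + \cdots \bigr),$$
where the bracketed power series, evaluated at $\zeta_n-\zeta$, tends to its value $b_p \neq 0$ at $0$ as $n \to \infty$. Taking absolute values, taking logarithms, and dividing by $\log|\zeta_n-\zeta| \to -\infty$ immediately yields the first limit, equal to $p$. Dividing the displayed identity itself by $(\zeta_n-\zeta)^p$ instead gives $a_p = b_p$.

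Finally, I would proceed by induction on $k \geq 1$. If $a_{p+j} = b_{p+j}$ for $0 \leq j < k$, then the numerator defining $a_{p+k}$ reduces, via the Taylor expansion, to
$$(\zeta_n-\zeta)^{p+k} \bigl( b_{p+k} + b_{p+k+1}(\zeta_n-\zeta) + \cdots \bigr),$$
and dividing by $(\zeta_n-\zeta)^{p+k}$ and passing to the limit gives $a_{p+k} = b_{p+k}$, again by continuity of a convergent power series at $0$. Thus the formal series in \eqref{fdef} is precisely the Taylor expansion of $f$ at $\zeta$, and therefore converges on $U$. There is no substantive obstacle here: the conceptual content is the observation that the nested limits are engineered to peel off, one by one, the Taylor coefficients of any analytic interpolant, and this extraction is a formality once the expansion and $\zeta_n \to \zeta$ are in hand.
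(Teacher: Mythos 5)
Your proof is correct and takes essentially the same approach as the paper: identify $p$ and the $a_{p+k}$ with the order of vanishing and Taylor coefficients of $f$ at $\zeta$, substitute $z = \zeta_n$ into the expansion, and extract each coefficient in turn by dividing and passing to the limit. The paper's version is terser (it simply asserts the limit statements follow from the substituted Taylor series), and your write-up is somewhat more careful in spelling out the induction and in flagging the degenerate case $f \equiv w$, but the underlying argument is the same.
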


Our result giving a necessary condition for Question~\ref{question:1} is the following immediate consequence of Proposition~\ref{prop:interpol}.
\begin{theorem}
\label{theo:accum}
Suppose that $\seq$ is realised by a \tef~$f$, and that $z_{n_j}$ is a convergent subsequence tending to a finite point $\zeta$ with $z_{n_j} \ne \zeta$ for all $j$. Then the limits in Proposition~\ref{prop:interpol} all exist, with $\zeta_j = z_{n_j}$, $w_j = z_{n_j+1}$ and $w = f(\zeta)$, and $f$ has a Taylor series expansion about $\zeta$ as given in \eqref{fdef} that converges on $\C$.
\end{theorem}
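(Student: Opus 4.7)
The theorem is presented as an immediate corollary of Proposition \ref{prop:interpol}, so my approach is simply to check that the hypotheses of that proposition are met under the substitutions stated in the theorem, and then to upgrade the resulting local Taylor expansion to a global one, using the fact that $f$ is entire.

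First I would set $\zeta_j \defeq z_{n_j}$, $w_j \defeq z_{n_j+1}$, and $w \defeq f(\zeta)$, and verify the setup of Proposition \ref{prop:interpol}. By hypothesis, $\zeta_j \to \zeta$ with $\zeta_j \neq \zeta$; continuity of $f$ together with the orbit relation $f(z_n) = z_{n+1}$ gives $w_j = f(\zeta_j) \to f(\zeta) = w$. Since the convergent sequence $(\zeta_j)$ is bounded, one can pick a disc $U$ centred at $\zeta$ that contains every $\zeta_j$; on $U$ (indeed on all of $\C$) the entire function $f$ is analytic and, by construction, interpolates the data. Applying Proposition \ref{prop:interpol} then yields the existence of the integer $p \in \N$, the nonzero coefficient $a_p$, and the subsequent coefficients $a_{p+k}$, together with convergence of the formal series \eqref{fdef} on $U$.

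Next I would identify \eqref{fdef} with the genuine Taylor expansion of $f$ about $\zeta$. Because $f$ is entire and non-constant, it admits an analytic expansion $f(z) = w + \sum_{k \geq p'} c_k (z - \zeta)^k$ with $c_{p'} \neq 0$, where $p'$ is the order of vanishing of $f - w$ at $\zeta$. Substituting $z = \zeta_n$ into this expansion and comparing against the iterative limit formulas defining $p$ and the $a_{p+k}$ in Proposition \ref{prop:interpol} shows inductively that $p = p'$ and $a_{p+k} = c_{p+k}$, so the two series coincide. Since $f$ is entire, its Taylor expansion at $\zeta$ has infinite radius of convergence, upgrading the conclusion from convergence on $U$ to convergence on $\C$, as required.

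The bulk of the work lies in Proposition \ref{prop:interpol} rather than in the present theorem; the only subtlety for Theorem \ref{theo:accum} itself is the identification of the abstractly defined coefficients with the analytic Taylor coefficients of $f$, which is routine given uniqueness of Taylor expansions and the fact that the local sum of \eqref{fdef} agrees with $f$ on the sequence $(\zeta_j)$ accumulating at $\zeta$.
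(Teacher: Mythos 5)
Your proof is correct and takes essentially the same route as the paper: the paper states Theorem~\ref{theo:accum} as an immediate consequence of Proposition~\ref{prop:interpol} and gives no further argument, and your write-up simply supplies the expected details (verifying the hypotheses under the stated substitutions, noting that the series \eqref{fdef} is the genuine Taylor series of the entire function $f$ at $\zeta$, and hence converges on all of $\C$). The identification of the abstract coefficients with the analytic Taylor coefficients is in fact already built into the paper's proof of Proposition~\ref{prop:interpol}, so that step is a harmless elaboration rather than a divergence.
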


Finally, we restrict to the case that $\seq$ is a candidate orbit with a unique finite accumulation point $\zeta$. Observe that, by considering instead the sequence $(z_n - \zeta)_{n\geq 0}$, we can assume that $\zeta=0$. It follows from Theorem~\ref{theo:accum} that there are very strong constraints for a sequence tending to zero to be realised by an analytic function; in view of the Identity theorem this is perhaps unsurprising. We discuss some examples of candidate orbits tending to zero that cannot be realised by any analytic function in Section~\ref{S.transcendental} below.


It is natural to ask, therefore, if these constraints can be relaxed if we allow the function to be \emph{quasiregular}. Roughly speaking, a quasiregular map is a map from $\C$ to $\C$ that is continuous, differentiable almost everywhere, and has uniformly bounded distortion; see \cite{Rickman, Vuorinen} for a more precise definition. A \emph{quasiconformal} map is a quasiregular map that is also a homeomorphism.

Our final results show that in this setting the constraints on the sequence can be significantly weaker. First we give a necessary condition for a sequence tending to zero to be realised by a quasiregular map.
\begin{theorem}
\label{theo:qrnec}
Suppose that $\seq$ is a sequence of points of $\C$ that tends to zero. If $\seq$ is realised by a quasiregular map, then there exist $\mu, \nu > 0$, $C>1$, and $n_0 \in \N$ such that
\begin{equation}
\label{e1}
\frac{1}{C^2} \left(\frac{|z_n|}{|z_{n+1}|}\right)^\mu \leq \frac{|z_{n+1}|}{|z_{n+2}|} \leq C^2 \left(\frac{|z_n|}{|z_{n+1}|}\right)^\nu, \ \text{for } n \geq n_0 \ \text{such that } |z_n| \geq |z_{n+1}|,
\end{equation}
and
\begin{equation}
\label{e1b}
\frac{1}{C^2} \left(\frac{|z_{n+1}|}{|z_{n}|}\right)^\mu \leq \frac{|z_{n+2}|}{|z_{n+1}|} \leq C^2 \left(\frac{|z_{n+1}|}{|z_{n}|}\right)^\nu, \ \text{for } n \geq n_0 \ \text{such that } |z_n| \leq |z_{n+1}|.
\end{equation}
\end{theorem}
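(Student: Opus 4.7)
The plan is to use the local structure of the quasiregular map $f$ near $0$ --- specifically, the Stoilow factorisation together with the quasi-symmetry of the quasiconformal factor --- to derive a two-sided ratio estimate for $f$ at the fixed point $0$. Continuity of $f$ together with $z_n \to 0$ forces $f(0) = 0$, and we may assume throughout that $z_n \ne 0$ for all $n$ large enough, since otherwise the sequence is eventually $0$ and both conditions in the theorem become vacuous.

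By the Stoilow factorisation theorem for planar quasiregular maps, on some neighbourhood $V$ of $0$ we may write $f|_V = H \circ G$, where $G \colon V \to G(V)$ is a $K$-quasiconformal homeomorphism with $G(0) = 0$ and $H$ is holomorphic on $G(V)$ with $H(0) = 0$. Let $m \ge 1$ be the order of the zero of $H$ at $0$, so that $H(w) = cw^m + O(w^{m+1})$ near $0$ with $c \ne 0$; after shrinking $V$ we may assume
\[
\tfrac12 |c|\, |w|^m \le |H(w)| \le 2|c|\, |w|^m \qquad \text{for all } w \in G(V).
\]
Since $G$ fixes $0$ and is $K$-quasiconformal, it is locally $\eta$-quasisymmetric at $0$ with a distortion function of power type (see, e.g., \cite{Vuorinen}); shrinking $V$ once more, there is a constant $C_0 \ge 1$ such that
\[
\frac{1}{C_0}\left(\frac{|a|}{|b|}\right)^{1/K} \le \frac{|G(a)|}{|G(b)|} \le C_0 \left(\frac{|a|}{|b|}\right)^{K}
\]
for all $a, b \in V \setminus \{0\}$ with $|a| \ge |b|$. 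Combining the two displays immediately produces a constant $C' \ge 1$ with
\[
\frac{1}{C'}\left(\frac{|a|}{|b|}\right)^{m/K} \le \frac{|f(a)|}{|f(b)|} \le C' \left(\frac{|a|}{|b|}\right)^{mK}
\]
for all such $a, b$.

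Since $z_n \to 0$, there is $n_0 \in \N$ with $z_n, z_{n+1} \in V$ for all $n \ge n_0$. For such $n$ with $|z_n| \ge |z_{n+1}|$, applying the last display with $(a,b) = (z_n, z_{n+1})$ and using $f(z_n) = z_{n+1}$ and $f(z_{n+1}) = z_{n+2}$ yields \eqref{e1} with $\mu = m/K$, $\nu = mK$, and $C \defeq \max(\sqrt{C'}, 2)$. Taking $(a, b) = (z_{n+1}, z_n)$ handles the case $|z_n| \le |z_{n+1}|$ and proves \eqref{e1b} with the same constants.

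The main obstacle is the two-sided, power-law quasi-symmetry of the quasiconformal factor $G$ at the fixed point $0$. Merely combining a H\"older upper bound $|G(a)| \le c_1 |a|^{1/K}$ with an inverse-H\"older lower bound $|G(b)| \ge c_2 |b|^{K}$ is insufficient, because the resulting bound on $|G(a)|/|G(b)|$ depends on $|a|$ and $|b|$ separately rather than on the ratio $|a|/|b|$ alone. The genuine $\eta$-quasisymmetry of planar $K$-quasiconformal maps, with $\eta$ of power type, is precisely what is required; once it is in hand, composition with the holomorphic factor $H$ of finite order yields the theorem essentially by direct computation.
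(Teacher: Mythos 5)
Your proof is correct, but it takes a genuinely different route from the paper. The paper's proof invokes the max/min modulus estimate of Theorem~\ref{theo1} (quoted from \cite{superattracting}; see also \cite{guletc}): for a non-constant quasiregular map fixing the origin, $T^{-\mu}\leq M(r)/m(Tr)\leq C^2T^{-\nu}$ and $C^{-2}T^{-\mu}\leq m(r)/M(Tr)\leq T^{-\nu}$ for small $r>0$ and $T\in(0,1]$; the desired inequalities then drop out by chaining $|z_{n+1}|=|f(z_n)|\leq M(|z_n|)$ with $|z_{n+2}|=|f(z_{n+1})|\geq m(|z_{n+1}|)$, and the analogous estimates. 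You instead reach an equivalent two-sided power-law ratio estimate via the Stoilow factorisation $f=H\circ G$, combining the local index $m$ of the holomorphic factor $H$ at $0$ with the power-type local quasisymmetry of the quasiconformal factor $G$ at its fixed point; the exponents come out as $\mu=m/K$ and $\nu=mK$, which is precisely how Theorem~\ref{theo1} is established in the cited sources. In effect you have reproved the relevant special case of the paper's black box instead of citing it. Quoting the packaged estimate is shorter and spares you the care needed to justify the power-type quasisymmetry modulus for a quasiconformal map defined only on a neighbourhood of $0$ (one should either extend $G$ from a smaller closed disc to a global quasiconformal map of the plane, or appeal to the local quasisymmetry theorem on a compact subset); your route is a touch sharper in that it bounds $|f(a)|/|f(b)|$ directly rather than via $M$ and $m$, and it makes explicit where the exponents $\mu,\nu$ come from. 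One small point of wording: the reduction to $z_n\ne 0$ is really a dichotomy --- since $f(0)=0$, if some $z_N=0$ then $z_n=0$ for all $n\geq N$, in which case \eqref{e1} and \eqref{e1b} are vacuous, and otherwise $z_n\ne 0$ for every $n$.
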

\begin{remark}\normalfont
It seems natural to ask if the inequalities \eqref{e1} and \eqref{e1b} are equivalent to the condition that there exist $\alpha, \beta > 0$ such that
\begin{equation}
\label{eq:holder}
|z_n|^\alpha \leq |z_{n+1}| \leq |z_n|^\beta, \quad \text{for all sufficiently large values of } n.
\end{equation}
We show in the Appendix that slightly weaker conditions than \eqref{e1} and \eqref{e1b} do indeed imply \eqref{eq:holder}, but we give an example to show that the reverse implication does not hold.
\end{remark}
The following result gives a sufficient condition for a sequence to be realised by a quasiregular map.
\begin{theorem}
\label{theo:qrsuf}
Suppose that $\seq$ is a sequence of points of $\C$, tending to zero and of strictly decreasing modulus. Suppose also that there exist $\mu, \nu, n_0 > 0$ and $C>1$ such that \eqref{e1} holds. Suppose finally that there exists $D \in (0, 1)$ such that
\begin{equation}
\label{eq:condition2}
|z_{n+1}| \leq D |z_n|, \qfor n \geq 0.
\end{equation}
Then $\seq$ is realised by a quasiconformal map of $\C$.
\end{theorem}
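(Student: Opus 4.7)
The plan is to build $f$ explicitly as a piecewise map on a concentric annular decomposition of $\C\setminus\{0\}$, then extend across $0$ using the removable singularity theorem for quasiconformal maps. For each $n\ge 0$, write $z_n=|z_n|e^{i\theta_n}$, set $u_n:=\log|z_n|$ and $r_n:=u_n-u_{n+1}=\log(|z_n|/|z_{n+1}|)>0$, and let $A_n:=\{z:|z_{n+1}|\le|z|\le|z_n|\}$. In the logarithmic coordinate $\zeta=u+i\theta$, I would define $f|_{A_n}$ by
\begin{equation*}
u+i\theta\ \mapsto\ v_n(u)+i\bigl(\theta+\alpha_n(u)\bigr),
\end{equation*}
where $v_n:[u_{n+1},u_n]\to[u_{n+2},u_{n+1}]$ is the affine bijection with $v_n(u_n)=u_{n+1}$, and $\alpha_n$ is affine in $u$ satisfying $\alpha_n(u_n)=\theta_{n+1}-\theta_n$ and $\alpha_n(u_{n+1})=\theta_{n+2}-\theta_{n+1}$ (representatives chosen in $(-\pi,\pi]$). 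A direct check shows $f(z_n)=z_{n+1}$, that the formulas on $A_n$ and $A_{n+1}$ agree on the shared circle $|z|=|z_{n+1}|$, and that on $|z|=|z_0|$ the $A_0$-formula agrees with the conformal map $z\mapsto(z_1/z_0)z$. Extending $f$ by this conformal map on $\{|z|\ge|z_0|\}$ and by $f(0)=0$ produces a continuous self-bijection of $\C$ that restricts to a homeomorphism on $\C\setminus\{0\}$.

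To verify quasiconformality with uniform distortion, I would work in log coordinates, where the real Jacobian of $f|_{A_n}$ is
\begin{equation*}
\begin{pmatrix} v_n'(u) & 0\\ \alpha_n'(u) & 1\end{pmatrix},\quad\text{with } v_n'(u)=\frac{r_{n+1}}{r_n}\ \text{ and }\ |\alpha_n'(u)|\le\frac{2\pi}{r_n}.
\end{equation*}
Since $\log$ is conformal on $\C\setminus\{0\}$, a standard calculation of the Beltrami coefficient gives
\begin{equation*}
1-|\mu_f|^2 \;=\; \frac{4\,v_n'(u)}{(v_n'(u)+1)^2+(\alpha_n'(u))^2},
\end{equation*}
so the dilatation on $A_n$ is bounded provided $v_n'(u)$ is bounded above and below away from $0$ and $|\alpha_n'(u)|$ is bounded.

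The crux of the argument is establishing such uniform bounds. Condition \eqref{eq:condition2} gives $r_n\ge-\log D>0$ for every $n$, which controls $1/r_n$ uniformly. Rewriting \eqref{e1} in logarithmic form yields
\begin{equation*}
\mu r_n - 2\log C \;\le\; r_{n+1} \;\le\; \nu r_n + 2\log C\qfor n\ge n_0.
\end{equation*}
The upper bound on $r_{n+1}/r_n$ is then immediate. For the lower bound I would split into the cases $r_n\ge 4\log C/\mu$, in which the $-2\log C$ is absorbed so $r_{n+1}\ge (\mu/2)\,r_n$, and $r_n<4\log C/\mu$, in which $r_{n+1}\ge-\log D$ combined with the upper bound on $r_n$ yields a uniform positive ratio. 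This step is the main obstacle: neither hypothesis alone suffices, since \eqref{e1} by itself allows $r_{n+1}/r_n\to 0$ when $r_n\to 0$, and \eqref{eq:condition2} by itself permits $r_n\to\infty$; it is their interaction that forces the required two-sided control. The finitely many annuli with $n<n_0$ contribute only finite extra dilatation. Therefore $f$ is $K$-quasiconformal on $\C\setminus\{0\}$ for some $K<\infty$, and since $f$ is continuous at $0$, the removable singularity theorem for quasiconformal maps upgrades $f$ to a $K$-quasiconformal self-homeomorphism of $\C$ realising $\seq$.
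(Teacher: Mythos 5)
Your proposal is correct and follows essentially the same route as the paper: both decompose a punctured neighbourhood of $0$ into the annuli $A_n$, pass to logarithmic coordinates, interpolate by a map that is affine in $u=\log|z|$ and shears the angular coordinate, compute the Beltrami coefficient $|\mu|^2=\frac{(g'-1)^2+\theta'^2}{(g'+1)^2+\theta'^2}$, and then use \eqref{e1} together with $\log(|z_n|/|z_{n+1}|)\ge\log(1/D)$ to get a uniform bound on the dilatation. The only cosmetic differences are that the paper writes the pieces as $z\mapsto z_{n+2}\exp\!\bigl(\phi(\log(z/z_{n+1}))\bigr)$ with $\phi$ linear and bounds $d'/d+d/d'+16\pi^2/(dd')$ directly by absorbing the $2\log C$ terms via $d,d'\ge\log(1/D)$, whereas you bound $r_{n+1}/r_n$ two-sidedly with a short case split; these are the same estimate.
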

\begin{remark}\normalfont
It is natural to ask if it is possible to omit the condition \eqref{eq:condition2}, provided that the other conditions of Theorem~\ref{theo:qrsuf} are satisfied. We give two examples below to illustrate the difficulties involved in such questions. Example~\ref{ex:qrcleverexample} gives such a sequence which is, in fact, realised by a quasiregular map, but which cannot be realised using the techniques used in the proof of Theorem~\ref{theo:qrsuf}. Example~\ref{ex:qrnew} is a sequence of strictly decreasing modulus, which satisfies \eqref{e1}, but which we show cannot be realised by any quasiregular map. It seems that giving both necessary and sufficient conditions for a sequence which tends to zero to be realised by a quasiregular map is a difficult problem.
\end{remark}

Finally, we observe that that is easy to see how Definition~\ref{def:candidateorbit} can be modified to apply to a sequence of points of $\R^d$ for $d \geq 1$. It is equally straightforward to modify Theorem~\ref{theo:cts} and Theorem~\ref{theo:qrnec} to apply in this more general setting.
\section{Continuous functions}
\label{S.continuous}
This section is dedicated to the proof of Theorem~\ref{theo:cts}, which requires the Tietze extension theorem; see, for example, \cite[Theorem 15.8]{Willard}.
\begin{theorem}
\label{theo:tietze}
Suppose that $X$ is a normal topological space, that $A \subset X$ is closed, and that $f : A \to \R$ is continuous. Then there exists a continuous map $F : X \to \R$ that agrees with $f$ on $A$.
\end{theorem}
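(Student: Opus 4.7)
The plan is to prove Tietze's theorem by first reducing to the case of a bounded function and then constructing an extension as a uniformly convergent series of continuous functions on $X$, each produced by Urysohn's lemma. The normality of $X$ is used only via Urysohn's lemma, which says that for any two disjoint closed sets $E, E' \subset X$ there is a continuous map $X \to [0,1]$ equal to $0$ on $E$ and $1$ on $E'$; this is the standard consequence of normality that I would either cite or sketch via the usual dyadic interpolation argument.

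For bounded continuous $g : A \to [-1,1]$, I would construct inductively a sequence of continuous functions $g_n : X \to \R$ satisfying $|g_n| \leq \tfrac{1}{3}(\tfrac{2}{3})^{n-1}$ on $X$ and $|g - \sum_{k=1}^n g_k| \leq (\tfrac{2}{3})^n$ on $A$, by applying Urysohn's lemma at each stage to the two disjoint sets within $A$ where the residual is at least $+\tfrac{1}{3}(\tfrac{2}{3})^n$ or at most $-\tfrac{1}{3}(\tfrac{2}{3})^n$; these are closed in $A$, and hence in $X$ since $A$ is closed. The series $F = \sum_{n=1}^\infty g_n$ then converges uniformly on $X$ by the Weierstrass M-test, so $F$ is continuous, and $F|_A = g$ by construction. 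For a possibly unbounded $f : A \to \R$, I would compose with the homeomorphism $h(t) = t/(1+|t|)$ from $\R$ onto $(-1,1)$ and apply the bounded case to $h \circ f$, obtaining a continuous $G : X \to [-1,1]$. The subtlety is that $G$ could attain $\pm 1$ on $X \setminus A$; to remedy this, set $B = G^{-1}(\{-1,1\})$, which is closed and disjoint from $A$, and invoke Urysohn's lemma a second time to produce a continuous $\varphi : X \to [0,1]$ with $\varphi \equiv 1$ on $A$ and $\varphi \equiv 0$ on $B$. Then $\varphi \cdot G$ maps $X$ into $(-1,1)$ and agrees with $h \circ f$ on $A$, so $F \defeq h^{-1} \circ (\varphi G)$ is the required continuous extension of $f$.

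The main obstacle, assuming Urysohn's lemma is available, is the telescoping bookkeeping in the bounded case together with the trimming step $\varphi \cdot G$ in the unbounded case, and both are routine. If Urysohn's lemma itself has to be proved, the core work shifts there, and the delicate point is the inductive construction, using normality at each stage, of a family of nested open sets indexed by the dyadic rationals in $(0,1)$ sandwiched between $E$ and $X \setminus E'$, from which the desired continuous function is read off as an infimum.
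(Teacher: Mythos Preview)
Your argument is the standard proof of the Tietze extension theorem and is correct. Note, however, that the paper does not actually prove this statement: it is quoted as a classical result with a reference to \cite[Theorem 15.8]{Willard} and then applied in the proof of Theorem~\ref{theo:cts}. So there is no ``paper's own proof'' to compare against; you have supplied a complete proof where the paper simply cites one.
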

\begin{proof}[Proof of Theorem~\ref{theo:cts}]
In one direction, suppose that $f : \C \to \C$ is a continuous map that realises the sequence $\seq$. Suppose that $(n_j)_{j\in\N}$ is a sequence of natural numbers such that the sequence $z_{n_j}$ tends to a point $z \in \C$. It follows by the continuity of $f$ that the sequence $z_{n_j+1} = f(z_{n_j})$ tends to a limit $z'$ such that $z' = f(z)$. This completes the proof in this direction.

In the other direction, suppose that $\seq$ is a candidate orbit. Set $$S \defeq \{ z_n : n \geq 0 \}.$$ We begin by constructing a function $f : \overline{S} \to \C$. First we define $f$ on $S$ by setting
\[
f(z_{n-1}) = z_n, \qfor n \in \N.
\]
Note that the fact that $\seq$ is a candidate orbit implies that this choice is well-defined.

Now suppose that $z \in \overline{S} \setminus S$. Then there is a sequence $(n_j)_{j\in\N}$ such that $z_{n_j} \rightarrow z$ as $j\rightarrow\infty$. By the definition of a candidate orbit, there is a point $z' \in \C$, which depends only on $z$, such that $z_{n_j+1} \rightarrow z'$ as $j \rightarrow\infty$. We define $f(z) = z'$.
Note that this function is well-defined because of the assumption in the definition of a candidate orbit that $z'$ depends only on $z$.

Next, we claim that for any $z\in \overline{S}$ and $\epsilon >0$ there exists $z_n\in S$ such that $|z-z_n|<\epsilon$ and $|f(z)-f(z_n)|<\epsilon$. This is clear if $z\in S$; we pick $z_n=z$. If $z \in \overline{S} \setminus S$, then there is a sequence  $z_{n_j} \rightarrow z$ and by definition $$f(z) = \lim_{j\to\infty} z_{n_j +1} = \lim_{j\to\infty} f(z_{n_j}),$$ so the claim follows by taking $n=n_j$ with $j$ large.

We now prove that $f$ is continuous on $\overline{S}$. To this end, we take a sequence of points $(w_p)_{p\in\N}$ in $\overline{S}$ tending to a point $w \in \overline{S}$ and aim to prove that $f(w_p) \to f(w)$ as $p\to\infty$. By the claim above, we can find $z_{n_p}\in S$ such that
\begin{equation}
\label{eq:2seqsconv}
 |w_p - z_{n_p}| < \frac1p \quad \mbox{ and } \quad   |f(w_p) - f(z_{n_p})| < \frac1p, \qfor p \in \N.
\end{equation}
In particular, $\lim_{p\to\infty} z_{n_p} = \lim_{p\to\infty} w_p = w$. Hence, by definition and construction, $\lim_{p\to\infty} f(z_{n_p}) = f(w)$; to see this in the case that $w\in S$, say $w=z_N$, we use the fact that the constant sequence $(z_N)_{p\in\N}$ has the same limit as $(z_{n_p})_{p\in\N}$ and thus by Definition~\ref{def:candidateorbit},
\[ \lim_{p\to\infty} f(z_{n_p}) = \lim_{p\to\infty} z_{n_p +1} = \lim_{p\to\infty} z_{N+1} = f(w). \]
It now follows from \eqref{eq:2seqsconv} that $f(w_p) \to f(w)$ as $p\to\infty$, proving continuity on $\overline{S}$.

The first part of the result then follows by applying the Theorem~\ref{theo:tietze} first to the real part of $f$ in $\overline{S}$, and then to the imaginary part of $f$ in $\overline{S}$.

The claim regarding uniqueness can be seen to be true as follows. First we note that the construction of $f$ in $\overline{S}$ was clearly unique. Hence, if $S$ is dense in the plane then $f$ is unique. However, if $S$ is not dense in the plane, then prior to applying Theorem~\ref{theo:tietze} we can choose a point $w \notin \overline{S}$ and any point $z \in \C$, and fix $f(w) = z$. We may then apply Theorem~\ref{theo:tietze} to the set $\overline{S} \cup \{w\}$, and obtain infinitely many continuous functions that realise the candidate orbit.
\end{proof}
\section{Entire functions}
\label{S.transcendental}
\begin{proof}[Proof of Theorem~\ref{theo:uniq}]
First we prove \eqref{cond:per}, and so we suppose that $\seq$ is a periodic candidate orbit. Let $n$ be the least integer such that $z_{n+1} = z_{n'}$ for some $0 \leq n' \leq n$. We need to find functions such that the orbit of $z_0$ begins $z_0, z_1, \ldots, z_n, z_{n'}$. For convenience, denote these numbers by $w_0, w_1, \ldots, w_n, w_{n+1}$.

Let $P$ be the polynomial
\[
P(z) \defeq \prod_{k=0}^{n} (z - w_k).
\]

Let $F$ be any polynomial if we are trying to find a polynomial, or any {\tef} if we are trying to find a transcendental entire function. Consider the function
\begin{equation}
\label{Fdef}
f(z) \defeq \sum_{k=0}^{n} \left(\frac{P(z)}{(z-w_k)} \cdot \left(F(z) - F(w_k) + \frac{w_{k+1}}{\prod_{k'=0, k' \ne k}^{n} (w_k - w_{k'})}\right)\right).
\end{equation}
This function has the required properties. (Note that this part of the result can also be proved using the same technique as used later for escaping sequences, but the explicit nature of \eqref{Fdef} is appealing.)

Next we prove \eqref{cond:bnotper}. Let $\seq$ be a bounded candidate orbit that is not periodic and suppose that $\seq$ is realised by two entire functions $f$ and $g$. Then the set $\{ z_n : n\ge 0\}$ is bounded and infinite and so accumulates at some point $z\in\C$. But each $z_n$ is a zero of the entire function $f-g$, and therefore $f\equiv g$.

The case \eqref{cond:unotesc} is almost identical to \eqref{cond:bnotper}, and we omit the details. It is easy to see that no polynomial can realise a sequence of this type.

To prove case \eqref{cond:esc}, suppose that $\seq$ is an escaping candidate orbit. We first show that there is a {\tef} $f$ that realises this orbit; note that this is essentially \cite[Exercise 6, p.26]{Knopp}. By the Weierstrass factorization theorem, there exists a {\tef} $f_0$ such that $f_0(z_n) = 0$, for $n \geq 0$, and all these zeros are simple. By Mittag-Leffler's theorem, there exists a transcendental meromorphic function $f_1$ with simple poles at the points $z_n$, with residues at these points equal to $z_{n+1}/f_0'(z_n)$, and with no other poles. Then the {\tef} $f(z) \defeq f_0(z) f_1(z)$ realises the orbit $\seq$. To see that there are many such functions, choose any entire function $h$ and set $g(z) \defeq f_0(z)\cdot (f_1(z) + h(z))$. Then $g$ is a {\tef} that realises $\seq$.

Note finally that $\seq$ cannot be realised by two polynomials, $P, Q$ say, as otherwise the non-constant polynomial $P-Q$ has infinitely many zeros. Here we make the additional remark that $\seq$ can only be realised by a polynomial if $\log|z_{n+1}| / \log|z_n|$ tends to an integer as $n\to\infty$.
\end{proof}

\begin{proof}[Proof of Proposition~\ref{prop:interpol}]
Suppose that the hypotheses of the proposition all hold. We must have $f(\zeta) = w$. Since $f$ is analytic in a neighbourhood of $\zeta$, there is a Taylor series for $f$ about $\zeta$ of the form \eqref{fdef}. Since $f(\zeta_n) = w_n$, for $n \geq 0$, we obtain
\begin{equation}
\label{zneq}
w_{n} = w + a_p (\zeta_n-\zeta)^p + a_{p+1} (\zeta_n-\zeta)^{p+1} + \ldots.
\end{equation}
The statements regarding the limits are then consequences of \eqref{zneq}, since $\zeta_n \rightarrow \zeta$ as $n \rightarrow \infty$.
\end{proof}


As promised in the introduction, we end this section with some examples of apparently straightforward candidate orbits, tending to zero, which cannot be realised by an analytic function.
\begin{example}\normalfont
Consider the candidate orbit $1, \frac{1}{2}, \frac{1}{4}, \frac{1}{16}, \frac{1}{256}, \ldots$; each term, apart from the second, is the square of the one before. The limits obtained from Proposition~\ref{prop:interpol} give $f(z) = z^2$, unsurprisingly. Since $f(1) \ne \frac{1}{2}$, this sequence cannot be realised by an analytic function. \label{ex1}
\end{example}
\begin{example}\normalfont
In the previous example, the function ``fails'' at the first term, but does correctly generate the rest of the orbit. Let $\epsilon_n$ be a sequence of positive numbers that tends to zero much more quickly than $2^{-2^n}$; for example $\epsilon_n = 2^{-10^{n+1}}$. Then consider the candidate orbit
\[
\frac{1}{2} +  \epsilon_1, \frac{1}{4} + \epsilon_2, \frac{1}{16} + \epsilon_3, \frac{1}{256} + \epsilon_4, \ldots.
\]
Again the limits obtained from Proposition~\ref{prop:interpol} give $f(z) = z^2$, but $f$ does not generate any part of the orbit above.
\end{example}
\begin{example}\normalfont
Choose $1<a<2$ and set
\[
z_n \defeq 2^{-a^n}, \qfor n\geq 0.
\]
This sequence cannot be realised by an analytic function, since the first limit resulting from Proposition~\ref{prop:interpol} is not an integer.
\end{example}
\begin{example}\normalfont
The final example is more complicated. For each $m \geq 3$, let $N_m$ be a large integer, to be determined, and set
\[
\sigma_{m,n} \defeq
\begin{cases} 0, &\text{ for } 0 \le n < N_m, \\
              2^{m^2}, &\text{ otherwise}.
\end{cases}
\]
Then consider the candidate orbit given inductively by
\[
\begin{cases}
z_0 &\defeq \frac{1}{2}, \\
z_{n+1} &\defeq z_n^2 + \sigma_{3,n} z_n^3 + \sigma_{4,n} z_n^4 + \sigma_{5,n} z_n^5 + \ldots. 
\end{cases}
\]
By choosing the terms in the sequence $N_m$ to be sufficiently large, we can ensure that $\seq$ tends to zero, and indeed is very close to the sequence $\frac{1}{2}, \frac{1}{4}, \frac{1}{16}, \ldots$. However, the limits obtained from Proposition~\ref{prop:interpol} give rise to the formal power series
\[
f(z) = z^2 + 2^{3^2} z^3 + 2^{4^2} z^4 + 2^{5^2} z^5 + \ldots,
\]
the radius of convergence of which is zero.
\end{example}
These examples show that it is very difficult for a sequence tending to zero to be realisable by an analytic function. Note that Theorem~\ref{theo:qrsuf} shows that \emph{all} these sequences can be realised by quasiconformal maps.
\section{Quasiregular maps}
This section is dedicated to the proofs of Theorem~\ref{theo:qrnec} and Theorem~\ref{theo:qrsuf}, together with some comments on the hypotheses of those theorems. We require the following, which is a version of \cite[Theorem 1.1]{superattracting}; see also \cite[Lemma 3.9]{guletc}. Here for a quasiregular map $f : \C \to \C$, we denote the maximum modulus by $M(r) \defeq \max_{|x| = r} |f(x)|$, and the minimum modulus by $m(r) \defeq \min_{|x| = r} |f(x)|$.
\begin{theorem}
\label{theo1}
Suppose that $f : \C \to \C$ is quasiregular and non-constant with $f(0)=0$. Then there exist $C > 1$ and $\mu, \nu, r_0 > 0$ with the following property. If $T \in (0,1]$ and $r \in (0,r_0)$, then
\begin{equation}
\label{1.4}
T^{-\mu} \leq \frac{M(r)}{m(Tr)} \leq C^2 T^{-\nu},
\end{equation}
and
\begin{equation}
\label{1.5}
\frac{1}{C^2} T^{-\mu} \leq \frac{m(r)}{M(Tr)} \leq T^{-\nu}.
\end{equation}
\end{theorem}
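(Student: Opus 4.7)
The plan is to derive \eqref{1.4} and \eqref{1.5} from two standard ingredients about quasiregular maps that fix the origin: a bounded comparability $M(r) \asymp m(r)$ on a punctured neighbourhood of $0$, and a two-sided power-law control on the radial growth of $M(r)$.

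For the first ingredient, since $f$ is quasiregular, non-constant and fixes $0$ in the plane, the origin has finite local index $i = i(0,f)$ and Sto\"ilow's theorem provides a local factorisation $f = h \circ g$, where $g$ is a quasiconformal germ with $g(0)=0$ and $h$ is holomorphic with $h(w) \sim c w^i$ near $0$. Combining the analytic behaviour of $h$ with the quasisymmetry of $g$ gives a constant $C_0 > 1$ such that $m(r) \leq M(r) \leq C_0\, m(r)$ for all sufficiently small $r$; this comparability is essentially the content of \cite[Theorem~1.1]{superattracting}. For the second ingredient, I would use the modulus-of-path-families method: subdividing the annulus between radii $Tr$ and $r$ into a chain of subannuli of bounded modulus and iterating Rickman-type estimates on each piece produces exponents $\mu, \nu > 0$ and a constant $C_1 > 1$, depending only on the distortion of $f$ and on $i$, such that
\[
\tfrac{1}{C_1}\, T^\nu M(r) \leq M(Tr) \leq C_1\, T^\mu M(r)
\]
for all $T \in (0,1]$ and all sufficiently small $r > 0$.

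Combining the two ingredients yields both inequalities. Using $M \leq C_0 m$ at radius $Tr$, we get $m(Tr) \geq C_0^{-1} C_1^{-1} T^\nu M(r)$, so $M(r)/m(Tr) \leq C_0 C_1\, T^{-\nu}$, which is the right-hand inequality of \eqref{1.4} on setting $C^2 := C_0 C_1$. For the left-hand inequality, $m(Tr) \leq M(Tr) \leq C_1\, T^\mu M(r)$ gives $M(r)/m(Tr) \geq C_1^{-1}\, T^{-\mu}$, and the extra constant $C_1^{-1}$ can be absorbed into the exponent after restricting to $T \leq T_0 < 1$, with the easy range $T \in [T_0,1]$ handled separately using the comparability. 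The inequalities in \eqref{1.5} then follow by an identical manipulation with the roles of $M$ and $m$ interchanged. The principal obstacle is the first ingredient: the comparability $M \asymp m$ at a branched fixed point of a quasiregular map is delicate and is precisely where the hypotheses $f(0) = 0$ and finiteness of the local index are really used, whereas the second ingredient is a routine consequence of modulus theory.
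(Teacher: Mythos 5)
The paper does not actually prove Theorem~\ref{theo1}; it is quoted as a known result from \cite[Theorem 1.1]{superattracting} (see also \cite[Lemma 3.9]{guletc}), so there is no internal proof to compare against. Your reconstruction via Sto\"ilow factorisation for the comparability $M\asymp m$ and modulus estimates for the radial power-law control is indeed the standard route to a statement of this shape.

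However, the final combination step has a real gap. Two of the four inequalities carry no constant: the left of \eqref{1.4}, $T^{-\mu}\le M(r)/m(Tr)$, and the right of \eqref{1.5}, $m(r)/M(Tr)\le T^{-\nu}$. From your second ingredient in the form $C_1^{-1}T^{\nu}M(r)\le M(Tr)\le C_1T^{\mu}M(r)$ you only obtain these with a spurious factor $C_1^{-1}$, and that factor cannot be removed by the device you describe. Restricting to $T\le T_0$ does let you trade the constant for a smaller exponent $\mu'<\mu$, but the remaining range $T\in(T_0,1]$ is not ``easy'': there the target is still $M(r)/m(Tr)\ge T^{-\mu'}$ with a right-hand side strictly greater than $1$, whereas monotonicity of $M$ and $m$ only yields $M(r)/m(Tr)\ge1$, and the comparability $M\asymp m$ relates $M$ and $m$ at the \emph{same} radius, so it gives no control of $M(r)$ against $m(Tr)$ for $T<1$. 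At $T$ close to $1$, a two-sided bound with a multiplicative constant on the relevant side can never produce the constant-free inequality, for any choice of exponent.

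The fix is to obtain the radial power-law control in a constant-free form to begin with, rather than trying to strip off a spurious constant afterwards. The quasiregular modulus estimates can be used to give, after shrinking $r_0$, a clean two-sided pinching of the minimum modulus,
\[
T^{-\mu}\ \le\ \frac{m(r)}{m(Tr)}\ \le\ T^{-\nu}\qquad(0<T\le1,\ 0<r<r_0),
\]
with no constants, essentially because the bound arises from integrating a pointwise estimate on $\tfrac{d}{dt}\log m(e^t)$ rather than from comparing the values at two fixed radii. Then $M(r)/m(Tr)\ge m(r)/m(Tr)\ge T^{-\mu}$ gives the left of \eqref{1.4} outright, and $m(r)/M(Tr)\le m(r)/m(Tr)\le T^{-\nu}$ gives the right of \eqref{1.5} outright, while the remaining two sides acquire the $C^2$ via the comparability $M\le C_0 m$ exactly as in your write-up, with $C^2=C_0^2$. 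So your two ingredients are the right ones, but the second must be extracted from the modulus theory in its clean, constant-free form; it cannot be recovered at the end by constant absorption.
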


\begin{proof}[Proof of Theorem~\ref{theo:qrnec}]
Suppose that $f : \C \to \C$ is a quasiregular map that realises $\seq$. 
Choose $n_0$ sufficiently large that $|z_n| < r_0$ for all $n\ge n_0$, where $r_0$ is the constant from Theorem~\ref{theo1}.
Suppose, for some $n \ge n_0$, we have $|z_n| \geq |z_{n+1}|$. Set $T = \frac{|z_{n+1}|}{|z_n|} \leq 1$.  We deduce from \eqref{1.4}, with $r = |z_n|$, that
\begin{equation*}
\frac{|z_{n+1}|}{|z_{n+2}|} \leq \frac{M(|z_n|)}{m(|z_{n+1}|)} \leq C^2 \left(\frac{|z_{n+1}|}{|z_n|}\right)^{-\nu}.
\end{equation*}
Similarly we deduce from \eqref{1.5} that
\begin{equation*}
\frac{1}{C^2} \left(\frac{|z_{n+1}|}{|z_n|}\right)^{-\mu} \leq \frac{m(|z_n|)}{M(|z_{n+1}|)} \leq \frac{|z_{n+1}|}{|z_{n+2}|}.
\end{equation*}
These inequalities give \eqref{e1}.

On the other hand, suppose, for some $n \ge n_0$, we have $|z_n| \leq |z_{n+1}|$. Set $T = \frac{|z_{n}|}{|z_{n+1}|} \leq 1$. We deduce from \eqref{1.4}, with $r = |z_{n+1}|$, that
\[
\frac{|z_{n+2}|}{|z_{n+1}|} \leq \frac{M(|z_{n+1}|)}{m(|z_{n}|)} \leq C^2 \left(\frac{|z_{n}|}{|z_{n+1}|}\right)^{-\nu}.
\]
Similarly we deduce from \eqref{1.5} that
\[
\frac{1}{C^2} \left(\frac{|z_{n}|}{|z_{n+1}|}\right)^{-\mu} \leq \frac{m(|z_{n+1}|)}{M(|z_{n}|)} \leq \frac{|z_{n+2}|}{|z_{n+1}|}.
\]
These inequalities give \eqref{e1b}.
\end{proof}
\begin{proof}[Proof of Theorem~\ref{theo:qrsuf}]
We first construct a model map, and it is slightly easier to do this in logarithmic coordinates. Suppose that $g : \R \to \R$ and $\theta : \R \to \R$ are differentiable. We define a map $\phi : \C \to \C$ by
\[
\phi(x+ iy) \defeq g(x) + i(y + \theta(x)).
\]
It is then a calculation that
\[
\frac{d\phi}{dz} = \frac{1}{2}\left(\frac{d\phi}{dx} - i\frac{d\phi}{dy}\right) = \frac{1}{2}\left(g'(x) + 1 + i\theta'(x)\right),
\]
and
\[
\frac{d\phi}{d\bar{z}} = \frac{1}{2}\left(\frac{d\phi}{dx} + i\frac{d\phi}{dy}\right) = \frac{1}{2}\left(g'(x) - 1 + i\theta'(x)\right).
\]
Hence the complex dilatation $\mu_{\phi} \defeq \dfrac{d\phi/d\bar{z}}{d\phi/{dz}}$ satisfies
\begin{equation}
\label{eq:mudef}
|\mu_{\phi}(x + iy)|^2 = \frac{(g'(x)-1)^2+\theta'(x)^2}{(g'(x)+1)^2+\theta'(x)^2}.
\end{equation}

For $\phi$ to be quasiregular, we require $|\mu_{\phi}|$ to be bounded strictly below one. In other words, we need there to exist $\epsilon > 0$ such that
\[
\frac{(g'(x)-1)^2+\theta'(x)^2}{(g'(x)+1)^2+\theta'(x)^2} \leq 1 - \epsilon.
\]
By a calculation this is equivalent to requiring that there exists $L> 0$ such that
\begin{equation}
\label{eq:bound}
\left(g'(x) + \frac{1}{g'(x)}\right) + \frac{\theta'(x)^2}{g'(x)} \leq L.
\end{equation}

We now focus on the special case that $g$ and $\theta$ are the linear maps given by $g(x)=(d'/d)x$ and $\theta(x)=(\alpha/d)x$, where $d, d' >0$ and $-4\pi \le \alpha \le 4\pi$. That is, we consider the real-linear map $\phi : \C \to \C$ of the form
\begin{equation}
\label{eq:phidef}
\phi(x+iy) = \frac{d'x}{d} + i\left(y + \frac{\alpha x}{d}\right).
\end{equation}
Since $\theta'(x)^2 = \alpha^2/d^2 \leq 16\pi^2/d^2$, we deduce from \eqref{eq:bound} that $\phi$ is $K$-quasiconformal where $K$ depends only on an upper bound for the quantity
\begin{equation}
\label{eq:bound2}
\frac{d'}{d} + \frac{d}{d'} + \frac{16\pi^2}{dd'}.
\end{equation}
In particular, for any $n\ge0$, if we take
\begin{equation}
\label{eq:phichoices}
d=\log \frac{|z_n|}{|z_{n+1}|}, \quad d'=\log \frac{|z_{n+1}|}{|z_{n+2}|} \quad \mbox{and} \quad \alpha = 2\arg z_{n+1} - \arg z_n - \arg z_{n+2},
\end{equation}
choosing the principle value of the argument in each case, then $d, d' \ge \log(1/D) >0$ by \eqref{eq:condition2}. Moreover, for $n\ge n_0$,  the assumption  \eqref{e1} gives that \eqref{eq:bound2} is bounded above by
\[
 \frac{1}{\mu} + \nu + \left( 2 + \frac{2}{\mu}\right)\frac{\log C}{\log(1/D)} + \frac{16\pi^2}{(\log (1/D))^2}.
\]
Since this bound is independent of $n$, there exists $K\ge1$ such that for any $n\ge0$ the map $\phi$ given by \eqref{eq:phidef} and \eqref{eq:phichoices} is $K$-quasiconformal.

Next we define a quasiconformal map $f : \C \to \C$, as follows. First we set $f(0) = 0$. For each $n \geq 0$, we define $f$ on the annulus
\[ A_n \defeq \{ z \in \C : |z_{n+1}| < z \leq |z_n| \}\]
by setting
\[
f(z) \defeq z_{n+2} \exp \left(\phi\left(\log\frac{z}{z_{n+1}}\right)\right), \qfor z \in A_n,
\]
where $\phi$ is as in \eqref{eq:phidef} with constants chosen as in \eqref{eq:phichoices}.
Note that $f$ is well-defined on each $A_n$ because, by definition, $\phi(z + 2\pi i) = \phi(z) + 2 \pi i$. In fact, $f$ is a $K$-quasiconformal map of $A_n$ onto $A_{n+1}$. It is not hard to check that $f(z_n e^{i\beta}) = z_{n+1} e^{i\beta}$ for $\beta\in\R$, from which it follows that $f$ realises the sequence $\seq$ and is continuous in $\{ z \in \C : |z| \le |z_0| \}$. Therefore we obtain our required quasiconformal map on $\C$ by setting $f(z) = z_1 z / z_0$ for $|z|>|z_0|$.
\end{proof}

As mentioned in the introduction, it is natural to ask if it is possible to omit a condition like \eqref{eq:condition2} from Theorem~\ref{theo:qrsuf}. To illustrate the difficulty of such questions, consider the following examples.
\begin{example}\normalfont
\label{ex:qrcleverexample}
Define a map $P : \C \to \C$ by choosing $\epsilon > 0$ and $s \in (0, 1)$ both small, and setting
\[
P(z) \defeq
\begin{cases}
|z|(1 - |z|)e^{i t(\arg z)}, &\quad\text{ for } |z| \leq s, \\
|z|(1 - s)e^{i t(\arg z)},   &\quad\text{ for } |z| > s,
\end{cases}
\]
where
\[t(y) \defeq
\begin{cases}
2y + \epsilon, &\quad\text{ for } 0\leq y \leq \pi/2, \\
\frac{2(y+\pi)}{3} + \epsilon,  &\quad\text{ for } \pi/2 < y < 2\pi,
\end{cases}
\]
and we choose the value of $\arg z$ in $[0, 2\pi)$. It can be checked that if $s$ is sufficiently small, then $P$ is a quasiconformal map of the plane.

Next choose $z_0 \in (0, s)$, and define the sequence $\seq$ iteratively by letting $z_{n+1} = P(z_n)$, for $n \geq 0$. Note that this sequence satisfies all the conditions of Theorem~\ref{theo:qrsuf}, apart from \eqref{eq:condition2}.

Clearly the sequence $\seq$ is realised by a quasiconformal map. However this sequence cannot be realised by a quasiconformal map constructed as in the proof of Theorem~\ref{theo:qrsuf}. To see this,  suppose that, in logarithmic coordinates, we have interpolated using the linear maps $g$ and $\theta$ exactly as in the Proof of Theorem~\ref{theo:qrsuf}.  Note that the ratio $|z_{n+1}|/|z_n| = 1 - |z_n|$ increases to~$1$ as $n \rightarrow \infty$. 
It can be checked that the derivative $g'(x)$ tends to $1$ as $x$ tends to zero. However, the absolute value of the derivative $\theta'(x)$ can be arbitrarily large and thus the left-hand side of \eqref{eq:bound} is not bounded above. This happens, for example, if $\epsilon$ is very small and $\arg z_n$ is small and positive. We can then calculate that $\arg z_{n+1} = 2 \arg z_n + \epsilon$ and $\arg z_{n+2} = 2\arg z_{n+1} + \epsilon$, so for suitable values of $x$ we obtain
\[
\theta'(x) = \frac{\arg z_{n}+\epsilon}{\log(1- |z_n|)},
\]
which can have very large absolute value for large values of $n$.
\end{example}
\begin{example}\normalfont
\label{ex:qrnew}
Define the sequence $\seq$ by, for $m \geq 0$,
\[
\begin{cases}
z_{3m}   &\defeq e^{-(m+2)}, \\
z_{3m+1} &\defeq e^{-(m+2)} - e^{-(m+2)^2}, \\
z_{3m+2} &\defeq -\frac{e^{-(m+2)}}{2}.
\end{cases}
\]
It can be checked that this is a candidate orbit which satisfies all the hypotheses of Theorem~\ref{theo:qrsuf} apart from \eqref{eq:condition2}. We show that it is not possible to realise this sequence with a quasiregular map.

Suppose, by way of contradiction, that $\seq$ is realised by map $f$ which is quasiregular on a neighbourhood, $U$, of the origin. It is known that quasiregular maps satisfy a H\"older condition; see, for example, \cite[Theorem III.1.11]{Rickman}. In other words, there exist $\alpha \in (0, 1]$, and $r, C > 0$ such that
\[
|f(x) - f(y)| \leq C |x-y|^\alpha, \qfor x, y \in \{ z \in \C : |z| < r \}.
\]
In particular, taking $x = z_{3m}$ and $y = z_{3m+1}$ yields, for large values of $m$, that
\[
\frac{3}{2}e^{-(m+2)} - e^{-(m+2)^2} \leq Ce^{-\alpha(m+2)^2}.
\]
This is impossible for sufficiently large values of $m$, completing the proof of our claim.
\end{example}
\section*{Appendix}
Our goal in this appendix is to show that the inequalities \eqref{e1} and \eqref{e1b} imply \eqref{eq:holder}, but that the reverse implication does not hold. First we have the following, which has slightly weaker hypotheses than \eqref{e1} and \eqref{e1b}; we repeat \eqref{eq:holder} for the convenience of the reader.
\begin{proposition}
Suppose that $(x_n)_{n\in\N}$ is a sequence of positive numbers that tends to zero. Suppose also that there exist $\mu, \nu > 0$, $C>1$, and $n_0 \in \N$ such that
\begin{equation}
\label{e1newversion}
 \frac{x_{n+1}}{x_{n+2}} \leq C^2 \left(\frac{x_n}{x_{n+1}}\right)^\nu, \ \text{for } n \geq n_0 \ \text{such that } x_n \geq x_{n+1},
\end{equation}
and
\begin{equation}
\label{e2newversion}
\frac{1}{C^2} \left(\frac{x_{n+1}}{x_{n}}\right)^\mu \leq \frac{x_{n+2}}{x_{n+1}}, \ \text{for } n \geq n_0 \ \text{such that } x_n \leq x_{n+1}.
\end{equation}
Then there exist $\alpha, \beta > 0$ such that
\begin{equation}
\label{eq:holdernew}
x_n^\alpha \leq x_{n+1} \leq x_n^\beta, \qfor \text{all sufficiently large values of } n.
\end{equation}
\end{proposition}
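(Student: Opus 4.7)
The plan is to pass to logarithmic coordinates. For all sufficiently large $n$ we have $x_n \in (0,1)$, so set $y_n := -\log x_n > 0$ and note $y_n \to \infty$. Writing $d_n := y_{n+1} - y_n$ and $D := 2\log C$, the hypotheses \eqref{e1newversion} and \eqref{e2newversion} translate to
\begin{align*}
d_{n+1} &\le D + \nu d_n \qquad \text{when } d_n \ge 0, \\
d_{n+1} &\le D + \mu d_n \qquad \text{when } d_n \le 0,
\end{align*}
for $n \ge n_0$. The conclusion \eqref{eq:holdernew} is equivalent to the existence of $\alpha, \beta > 0$ with $\beta y_n \le y_{n+1} \le \alpha y_n$ for all sufficiently large $n$.

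For the upper bound, I would first observe that in both cases one has $d_{n+1} \le D + \nu y_{n+1}$: if $d_n \ge 0$ then $d_n \le y_n + d_n = y_{n+1}$; if $d_n \le 0$ then already $d_{n+1} \le D \le D + \nu y_{n+1}$. Hence $y_{n+2} \le (1+\nu)y_{n+1} + D$, and since $y_{n+1}\to\infty$ this yields $y_{n+2} \le (2+\nu)y_{n+1}$ for all large $n$. So $\alpha := 2+\nu$ suffices after a shift of index.

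The more delicate step is the lower bound. The key is that, although the hypotheses provide only \emph{upper} bounds on $d_{n+1}$, the trivial constraint $y_{n+2} \ge 0$ (i.e.\ $x_{n+2} \le 1$, eventually true) supplies the missing ingredient. Explicitly, when $d_n \le 0$, expanding $d_n = y_{n+1} - y_n$ in the second inequality above gives
\[
0 \;\le\; y_{n+2} \;=\; y_{n+1} + d_{n+1} \;\le\; (1+\mu)y_{n+1} + D - \mu y_n,
\]
which rearranges to $y_{n+1} \ge (\mu y_n - D)/(1+\mu) \ge \mu y_n/(2(1+\mu))$ as soon as $y_n \ge 2D/\mu$. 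When $d_n \ge 0$ the inequality $y_{n+1}\ge y_n$ is immediate. Setting $\beta := \mu/(2(1+\mu))$ then gives $y_{n+1} \ge \beta y_n$ for all large $n$, and the proof is complete.

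The main obstacle I anticipate is precisely this one-sided nature of the hypothesis: both \eqref{e1newversion} and \eqref{e2newversion} bound the increment $d_{n+1}$ only from above, so a naive induction on $d_n$ cannot prevent $y_{n+1}/y_n$ from collapsing to zero. The insight to be spelled out is that the non-negativity of $y_{n+2}$ automatically converts the upper bound on $d_{n+1}$ in the decreasing regime into the required lower bound on $y_{n+1}$, making the exponent $\mu$ from \eqref{e2newversion} responsible for the constant $\beta$.
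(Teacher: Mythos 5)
Your proof is correct, and in substance it is the same strategy as the paper's: pass to logarithmic coordinates and exploit the constraint that $x_{n+2}<1$ eventually (equivalently $y_{n+2}\ge 0$) to convert the one-sided hypothesis into the needed lower bound on $y_{n+1}/y_n$. The paper works with the ratios $p_n=\log x_{n+1}/\log x_n=y_{n+1}/y_n$ rather than the differences $d_n=y_{n+1}-y_n$, and it phrases the lower bound as a contradiction ($p_n<\beta$ would force $x_{n+2}>1$); in both versions the resulting constant is $\beta=\mu/(2(1+\mu))$. For the other inequality you diverge slightly and, I think, to advantage: the paper first proves $p_n\ge\beta$ and then plugs $1/p_n\le 1/\beta$ into the logarithm of \eqref{e1newversion}/\eqref{e2newversion} to get $p_{n+1}\le 1+\mu/(2\beta)+\lambda$, so its upper bound depends on the lower bound. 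Your observation that $d_n\le y_{n+1}$ when $d_n\ge 0$, while $d_{n+1}\le D$ outright when $d_n\le 0$, gives $y_{n+2}\le(1+\nu)y_{n+1}+D$ directly, and hence $\alpha=2+\nu$ for large $n$ without invoking $\beta$. This is a more modular argument with a cleaner constant, though of course both deliver the stated conclusion.
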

\begin{proof}
Let $N \geq n_0$ be sufficiently large that, for all $n \geq N$, we have both $x_n < 1$ and also
\begin{equation}
\label{N}
\frac{-2\log C}{\log x_n} < \frac{\mu}{2}.
\end{equation}
Set $p_n \defeq \dfrac{\log x_{n+1}}{\log x_n}$, for $n \geq N$, so that $x_{n+1} = x_n^{p_n}$ and $x_{n+2} = x_n^{p_np_{n+1}}$.

We first establish the upper bound in \eqref{eq:holdernew} by showing that $p_n \ge \beta  \defeq \frac{\mu}{2(\mu+1)}$ for $n\ge N$. Otherwise there exists $n\ge N$  such that $p_n < \beta <1$, in which case \eqref{e2newversion} holds. It would then follow that
\[ x_{n+2} \ge \frac{x_{n+1}}{C^2}\left(\frac{x_{n+1}}{x_n}\right)^\mu = \frac{x_n^{p_n(1+\mu)}}{C^2x_n^\mu} > \frac{x_n^{\mu/2}}{C^2x_n^\mu} > 1, \]
where the final inequality uses \eqref{N}. This contradicts the choice of $N$.

To complete the proof, we now take $n\ge N$ and seek an upper bound for $p_{n+1}$. For this $n$, either \eqref{e1newversion} holds or \eqref{e2newversion} holds, and we take $\lambda$ to be $\nu$ or $\mu$ respectively. Taking logarithms of \eqref{e1newversion} or \eqref{e2newversion} gives
\[ p_n(1-p_{n+1}) \log x_n \leq 2\log C + \lambda(1-p_n) \log x_n. \]
Dividing by $-p_n\log x_n > 0$ yields
\[ p_{n+1} - 1\leq \frac{-2\log C}{p_n\log x_n} + \lambda\left(\frac{p_n - 1}{p_n}\right). \]
Therefore, by \eqref{N} and the first part of the proof, we find that $p_{n+1} \le 1 + \frac{\mu}{2\beta} + \lambda$.
\end{proof}

To see that the reverse implication does not hold, let $x_0 = \frac{1}{2}$, and then set
\[
x_n \defeq \begin{cases} \frac{x_{n-1}}{2}, &\text{ for } n \in \N \text{ even}, \\
                    x_{n-1}^2, &\text{ for } n \in \N \text{ odd}.
			\end{cases}
\]
Then $x_n^2 \leq x_{n+1} \leq x_n$, for $n \geq 0$ and so \eqref{eq:holdernew} is satisfied. However, when $n$ is odd $\frac{x_{n+1}}{x_{n+2}} = \frac{1}{x_{n+1}}$ and $\frac{x_n}{x_{n+1}} = 2$, and so \eqref{e1newversion} cannot hold.

\mbox{ }

\noindent
\textbf{Acknowledgments.} We are grateful to John Osborne for helpful discussions.
\bibliographystyle{acm}
\bibliography{Orbits}
\end{document}